\providecommand{\U}[1]{\protect\rule{.1in}{.1in}}
\theoremstyle{plain}
\newtheorem{example}{Example}
\newtheorem{definition}{Definition}
\newtheorem{theorem}{Theorem}
\newtheorem{remark}[theorem]{Remark}
\newtheorem{corollary}[theorem]{Corollary}
\newtheorem{proposition}[theorem]{Theorem}
\newtheorem{lemma}[theorem]{Lemma}
\newtheorem*{thma}{Main Theorem}
\def\d{{\rm dom}\hphantom{.}}
\def\2{{\bf 2}}
\def\P2{{\rm Par}(\2)}
\def\PmA{{\rm Par}^{(m)}(\k)}
\def\PnA{{\rm Par}^{(n)}(\k)}
\def\Pn2{{\rm Par}^{(n)}(2)}
\def\PA{{\rm Par}(\k)}
\def\pp{{\rm pPol}\,}
\def\st{~|~}
\def \imp{\Longrightarrow}
\def\va{{\vec a}}
\def\r{\rho}
\def\0{\vec{0}}
\def\1{\vec{1}}
\def\k{{\bf k}}
\def\J2{{\rm J}(\2)}
\def\s{\sigma}
\def\pne{{\pp(\ne)}}
\def\ple{{\pp(\le)}}
\def\PN{{\mathcal P}({\bf E}_{\ge 4})}
\begin{document}
\title[Partial clones containing Boolean monotone self-dual p. funct.s]{Partial clones containing all Boolean monotone self-dual partial functions}
\author[M. Couceiro]{Miguel Couceiro}
\address[Miguel Couceiro]{LORIA (CNRS - Inria Nancy G.E. - Universit\'e de Lorraine), Equipe Orpailleur,
Batiment B, Campus Scientifique,   B.P. 239,  
 F-54506 Vandoeuvre-l\`es-Nancy}
 \email{miguel.couceiro@inria.fr}
\author[L. Haddad]{Lucien Haddad}
\address[Lucien Haddad]{D\'epartment de Math\'ematiques et d’Informatique, Coll\`ege militaire royal du Canada,
boite postale 17000, STN Forces, Kingston ON K7K 7B4 Canada.}
\email{haddad-l@rmc.ca }
\author[I. G. Rosenberg]{Ivo G. Rosenberg}
\address[Ivo G. Rosenberg]{D\'epartment de Math\'ematiques et Statistique, Universit\'e de Montr\'eal, Montr\'eal,
Qu\'ebec, H3C 3J7 Canada}
\email{rosenb@dms.umontreal.ca}
\thanks{This work was done while the first named author was a Research Assistant at the Mathematics Research Unit at the University of Luxembourg, Luxembourg, 
and an Associate Professor at LAMSADE at Universit\'e Paris-Dauphine, France.\\
The second named author wishes to acknowledge the financial support by ARP grants.\\
Les trois auteurs veulent rendre hommage \`a Maurice Pouzet. Maurice est un math\'e\-maticien brillant et
respect\'e, ses contributions scientifiques sont immenses. Tous ceux qui le connaissent appr\'ecient non
seulement sa profonde culture scientifique mais aussi ses grandes qualit\'es humaines. Maurice est plein
d’\'geards pour son entourage, il est g\'en\'ereux et tr\`es d ́\'evou\'e envers ses amis. Il prend plaisir \`a encourager, 
aider et rendre service aux autres. Nous avons la chance et le privil\`ege d’avoir Maurice comme
collaborateur et ami. Merci pour tout Maurice!}

\begin{abstract} The study of partial clones on $\2:=\{0,1\}$  was initiated by R. V. Freivald. In his fundamental paper published in 1966,
Freivald showed, among other things, that the set of all monotone partial functions and the set of all self-dual partial 
functions are both maximal partial clones on $\2$.

Several papers dealing with intersections of maximal partial clones on $\2$  have appeared after Freivald work. It is known that there are infinitely many  partial clones that contain the set of all monotone self-dual  partial functions on $\2$, and the problem of describing them all was posed by some authors.

In this  paper we show that the set of partial clones that contain all  monotone self-dual partial functions is of continuum cardinality on $\2$.
\end{abstract}
\maketitle

\section{Preliminaries}
   Let $A$ be a finite non-singleton set. Without loss of generality we assume that $A=\k:=\{0,\dots,k-1\}$.   For a positive integer
$n$, an {\ $n$-ary partial function\/} on $\k$ is a map $f: \d(f) \to
\k$ where $\d(f)$ is a subset of $\k^n$ called the {\it domain} of $f$.
Let $\PnA$ denote the set of all $n$-ary
partial functions on $\k$ and let 
$$\PA := \bigcup\limits_{n\ge 1}\PnA.$$

    For
$n, m \ge 1$, $f \in \PnA$ and $g_1, \dots, g_n \in \PmA$,  the
{\it composition} of $f$ and $g_1, \dots, g_n$, denoted by
$f[g_1, \dots, g_n] \in \PmA$, is defined by
\begin{eqnarray*}
\d(f[g_1, \dots, g_n])&:=& \{\va \in \k^m \st \va \in \bigcap\limits^m_{i=1} \d (g_i)\\
&&{\rm and}\;  (g_1(\va), \dots, g_m(\va))\in \d(f)\} 
\end{eqnarray*}

and 
\begin{eqnarray*}
 f[g_1, \dots, g_n](\va):=f(g_1(\va),\dots,
g_n(\va)),
\end{eqnarray*}
for all $\va \in \d( f[g_1, \dots, g_n]).$

   For every positive integer $n$ and each
$1 \le i \le n$, let
$e_{i}^n$ denote the {\it $n$-ary i-th projection function} defined by
  $$e^n_{i} (a_1, \dots, a_n) = a_i$$
  for all $(a_1, \dots, a_n) \in \k^n$.
 Furthermore, let
$$
J_\k:=\left\{e^n_{i} : 1 \le i\le n \right\}
$$
be the set of all (total) projections.
\vskip 0.1in
  
\begin{definition} {\rm A {\it partial clone} on $\k$ is a
composition closed subset of $\PA$ containing $J_\k$. }
\end{definition}

\vskip 0.1in

\begin{remark}
 There are two other equivalent  definitions for partial clones. One definition uses
Mal'tsev's  formalism and the other uses the concept of one point extension. These definitions can be found in chapter 20 of \cite{booklau}.
\end{remark}

The partial clones on $\k$, ordered by inclusion, form a lattice ${\mathcal L}_{P_\k}$
in which the infinimum is the set-theoretical intersection. That means that the intersection of an arbitrary family of partial clones on $\k$ is also a partial clone on $\k$.
A \emph{maximal partial clone} on $\k$
is a coatom of the lattice ${\mathcal L}_{P_\k}$. Therefore  a partial clone $M$ is  maximal if there is no partial clone
$C$ over $\k$ such that $M \subset C \subset \PA$.

\begin{example} The set of partial functions
$$\Omega_k:=\bigcup\limits_{n\ge 1} \{f \in \PnA \st \d(f) \ne \emptyset \imp \d(f)=\k^n\}$$
 is a maximal partial clone on $\k$.
\end{example}

\begin{definition} \label{preserves}   For $h \ge 1$, let $\r$ be an $h$-ary relation on $\k$ and $f$ be
an $n$-ary partial function on $\k$. We say that $f~preserves~
\r$ if for every $h \times n$ matrix $M=[M_{ij}]$ whose columns
$M_{*j} \in \r,~(j=1,\ldots n)$ and whose rows $M_{i*} \in \d
(f)$ $(i=1,\ldots,h)$, the $h$-tuple
$(f(M_{1*}),\ldots,f(M_{h*})) \in \r$.  Define
$$\pp\r:=\{f\in \PA \st f~{\rm preserves}~\r\}.$$
It is well known that $\pp \r$ is a partial clone called the {\it partial clone determined by
the relation} $\r$.
\end{definition}

Notice that if there is no  $h \times n$ matrix $M=[M_{ij}]$ whose columns
$M_{*j} \in \r$ and whose rows $M_{i*} \in \d
(f)$, then $f \in \pp \r$.

\begin{example} Let $\2 :=\{0,1\}$ and  let $\{(0,0),(0,1),(1,1)\}$ be the natural order on $\2$.
Consider the binary relation $\{(0,1),(1,0)\}$ on $\2$.
Then 
$$\pp \{(0,0),(0,1),(1,1)\} $$ is the set of all {\it monotone} partial functions
and 
$$\pp \{(0,1),(1,0)\}$$ 
is the set of all {\it self-dual} partial functions
on $\2$. 

For simplicity  we will write  $\ple$ and $\pne$ for  
$$\pp(\{(0,0),(0,1),(1,1)\}) \quad \text{and}\quad \pp(\{(0,1), (1,0)\}),$$ 
respectively. It is not difficult to see that
\begin{eqnarray*}
 \ple  &:=&\{f \in \P2 \st ~[\mathbf{a},\mathbf{b} \in \d(f),
\mathbf{a} \le \mathbf{b}] 
\imp f(\mathbf{a})\le f(\mathbf{b})\}, ~ \text{and}\\
\pne&:=&\{f \in \P2 \st ~[ \mathbf{a}, \mathbf{a}+1 \in \d(f)] \imp f(\mathbf{a}+1)=f(\mathbf{a})+1\}
\end{eqnarray*}
where the above sums are taken mod 2.
\end{example}

As mentioned earlier, Freivald showed that there are exactly eight maximal partial clones on $\2$. 
The following two relations are needed to state Freivald's result.
 Set 
 \begin{eqnarray*}
  R_1&:=&\{(x,x,y,y) \st x, y \in \2\} \cup \{(x,y,y,x) \st x, y
\in \2\}\quad \text{and}\\
R_2&:=& R_1 \cup \{(x,y,x,y) \st x, y \in \2\}.
 \end{eqnarray*}

\begin{theorem}[\cite{freivald}] \label{freivald} 
There are exactly 8 maximal partial clones on $\2$, namely, $\pp \{0\}$,  $\pp\{1\}$,
  $\pp\{(0,1)\})$, 
$\pp (\le)$, 
 $\pp(\ne)$, 
 $\pp (R_1)$,
 $\pp( R_2)$, and 
  $\Omega_2$. 
\end{theorem}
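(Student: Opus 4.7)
The plan is to follow the classical template for classifying maximal clones via a Galois-type correspondence, executed in three stages. \emph{Stage one} is bookkeeping: each of the eight sets is readily checked to be a partial clone. The seven of the form $\pp\rho$ are partial clones by Definition \ref{preserves}, while for $\Omega_2$ closure under composition is a direct verification using that the composition of totally-defined functions remains totally-defined whenever the outer arguments' domains overlap.

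\emph{Stage two} establishes maximality: for each candidate $C$ and each $f \in \P2 \setminus C$ I would show that the partial clone $\langle C \cup \{f\}\rangle$ generated by $C \cup \{f\}$ equals all of $\P2$. For the seven clones $\pp\rho$, the failure of $f$ to preserve $\rho$ unpacks into a concrete witness matrix whose rows lie in $\d(f)$, whose columns lie in $\rho$, and whose image lies outside $\rho$. Combining this witness with well-chosen members of $\pp\rho$ and the projections, one builds, for any target partial function $g$, an explicit term computing $g$. The constructions are ad hoc but routine once the right gadget for each $\rho$ is identified: for example, a non-monotone $f \in \P2 \setminus \ple$ produces $\va \le \vb$ in $\d(f)$ with $f(\va) = 1$ and $f(\vb) = 0$, which combined with the constant partial functions $0, 1 \in \ple$ already yields arbitrary partial functions via strategic projection substitutions. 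Maximality of $\Omega_2$ is argued separately: any strictly partial $f \in \P2 \setminus \Omega_2$, together with the projections, suffices to carve out arbitrary prescribed domains.

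\emph{Stage three} is the real obstacle: showing that these eight are the \emph{only} maximal partial clones. Here I would invoke the Galois connection between partial clones on $\2$ and invariant relations, afforded by the Mal'tsev-style characterization alluded to in the excerpt: every partial clone equals $\pp\mathcal{R}$ for some set of relations $\mathcal{R}$, so every maximal partial clone equals $\pp\rho$ for a single ``critical'' relation $\rho$. The classification then reduces to enumerating, up to the symmetry $0 \leftrightarrow 1$, the relations $\rho$ on $\2$ for which $\pp\rho$ is a coatom of $\mathcal{L}_{P_\2}$. Freivald's key insight is a bound of at most four on the arity of such critical relations on $\2$, which renders the enumeration finite; one then methodically discards those $\rho$ for which $\pp\rho$ is strictly contained in a larger proper partial clone, leaving precisely the eight on the list. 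Establishing the arity bound and carrying out the subsequent irredundancy analysis is where the bulk of the technical work, and the heart of the proof, resides.
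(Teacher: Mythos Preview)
The paper does not prove this theorem: it is quoted as Freivald's 1966 result with a bare citation to \cite{freivald}, and no argument (not even a sketch) is supplied. There is therefore nothing in the paper to compare your proposal against.

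That said, your outline has a genuine gap in Stage three. You assert that ``every partial clone equals $\pp\mathcal{R}$ for some set of relations $\mathcal{R}$, so every maximal partial clone equals $\pp\rho$ for a single critical relation $\rho$.'' This is false for partial clones in the preservation sense of Definition~\ref{preserves}: the Galois connection $\pp$--Inv characterizes exactly the \emph{strong} partial clones, i.e.\ those closed under taking subfunctions (restrictions to smaller domains). The clone $\Omega_2$ is not strong --- restricting a total function yields a non-total one --- and hence $\Omega_2$ is not of the form $\pp\mathcal{R}$ for any family of ordinary relations. So your reduction to enumerating critical relations cannot by itself produce $\Omega_2$, and a separate argument is needed to show that $\Omega_2$ is the unique maximal partial clone that is not strong. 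Freivald's original proof (and the textbook treatments, e.g.\ chapter~20 of \cite{booklau}) handle this bifurcation explicitly; your sketch papers over it.
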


Notice that the total functions in $\pp R_2$ (i.e., the functions with full domain) form the maximal clone of all (total) 
linear functions over $\2$ (see, e.g., chapter 3 of  \cite{booklau}).

An interesting and somehow difficult problem in clone theory is to study  intersections
of maximal partial clones.  It is shown in \cite{alekzeev} that
the set of all partial clones on $\2$ that contain the maximal
clone consisting of all {\it total} linear functions on $\2$ is
of continuum cardinality (for details see \cite{alekzeev, haddadlau} and Theorem 20.7.13 of \cite{booklau}).
 A consequence of this is that the interval of partial clones $[\pp(R_2) \cap \Omega_2, \P2]$ is of continuum cardinality on  $\2$.
 
A similar result, (but slightly easier to prove) is established in
\cite{haddad} where it is shown that the interval of partial clones $[\pp(R_1) \cap \Omega_2, \P2]$ is also of continuum cardinality.
Notice that the three maximal partial clones $\pp R_1,~\pp R_2$ and  $\Omega_2$ contain all unary functions (i.e., maps) on $\2$. 
Such partial clones are called {\it S\l upecki type} partial clones in \cite{haddadlau,romov2}. 
These are the only three maximal partial clones of S\l upecki type on $\2$.

For a complete study  of  the pairwise intersections
of all maximal partial clones of S{\l}upecki type on a finite non-singleton set $\k$, see \cite{haddadlau}.

 The papers
\cite{italian,infinite,lau-scho,strauch1,strauch2}  focus on the case
$k=2$ where various interesting, and sometimes hard to obtain, results are established. 

For instance, the intervals
$$ [\pp\{0\} \cap \pp\{1\}\cap \pp\{(0,1)\}\cap \pp(\le), ~ \P2]\quad \text{and}$$
$$ [\pp\{0\} \cap \pp\{1\}\cap \pp\{(0,1)\}\cap \pp(\ne), ~ \P2]$$ 
 are shown to be finite and are completely described in \cite{italian}. 
 Some of the results in \cite{italian} are included in \cite{strauch1,strauch2} where partial clones on $\2$ are handled via the one point extension approach 
 (see Section 20.2 in \cite{booklau}).

 In view of results from \cite{alekzeev,haddad,italian,strauch1,strauch2}, it was thought that if $2 \le i \le 5$ and $M_1, \dots,M_i$ 
 are non-S\l upecki maximal partial clones on $\2$, then the interval
 $$[ M_1 \cap \dots \cap M_i, \P2]$$
 is either finite or countably infinite. 
 
 Now it was shown in \cite{infinite} that the interval of partial clones $[\ple \cap \pne, \P2]$ is infinite. 
 This result is mentioned  in Theorem 20.8 of \cite{booklau} (with an independent proof given in \cite{lau-scho})   and in chapter 8 of the PhD thesis \cite{scholzel}.  
 However, it remained an open problem to determine whether $[\ple  \cap \pne,  \P2]$ is countably or uncountably infinite.

In this paper we settle this question by proving that  the interval of partial clones  $$[\ple  \cap \pne,  \P2]$$ is of continuum cardinality on $\2$.

\section{The construction}
\vskip0.3cm

For $n \ge 5$ and $n  >  k > 1$ we denote by  $\s ^n_k \subseteq \2^{2n}$   the $(2n)$-ary relation defined by
\begin{eqnarray*}
\s^n_k:=\{(x_1,\dots,x_n,y_1, \dots,y_n) \in \2^{2n} \st ~\forall ~i=1,\dots,n,~x_i  \ne y_i, \quad
\text{and }\\
\forall ~i=1,\dots,n,~ y_{i+1} \le x_{i}~ \text{ and }~ y_{i+2} \le x_{i} \dots~\text{ and }~ y_{i+k} \le x_{i}\},
\end{eqnarray*}
 where the subscripts $i+j$ in the above definition are taken modulo $n$. It is not difficult to see that
\begin{eqnarray*}
\s^n_k:=\{(x_1,\dots,x_n,y_{1}, \dots,y_{n}) \in \2^{2n} \st  ~\forall ~i=1,\dots,n,~x_i  \ne y_{i}, \quad
\text{and }\\
\forall ~i=1,\dots,n,~ x_{i} =0 \imp [ x_{i+1}=x_{i+2}= \dots = x_{i+k}= 1]\}.
\end{eqnarray*}
By the {\it Definability Lemma} established by B. Romov  in \cite{romov}  (see also Lemma 20.3.4 in \cite{booklau} and \cite{haddadlau,italian,infinite} for details), we have that 
$$\ple  \cap \pne  \subseteq \pp (\s^n_k)$$ for all $n \ge 5$ and all $k \ge 1$.

For $n \ge 5$ and $n  >  k \ge 1$, we denote by  $\r ^n_k \subseteq \2^{4n}$   the $(4n)$-ary relation defined by
\begin{eqnarray*}
\r^n_k:=\{(x_1,\dots,x_n,x_{n+1}, \dots,x_{2n},y_1, \dots,y_n,y_{n+1}, \dots,y_{2n}) \in \2^{4n} \st\\
 (x_1,\dots,x_n,y_1,\dots,y_n)\in \s^n_1, ~\text{ and }~(x_{n+1},\dots,x_{2n},y_{n+1},\dots,y_{2n})\in \s^n_k\}.
\end{eqnarray*}
Again by the Definability Lemma,  we have that 
$$\pp (\s^n_1) \cap \pp (\s^n_k) \subseteq \pp(\r^n_k),$$ and thus  $\ple  \cap \pne  \subseteq \pp (\r^n_k)$ for all $n \ge 5$ and all $k \ge 1$.

Our goal is to construct an infinite set of odd integers $X$ and an infinite family of partial functions $\{g_t,~t \in X\}$ so that for every 
$t,  t' \in X$, we have $g_t\in \pp \r^{{n(t')}}_{t'}$ if and only if $t \ne t'$.

\begin{remark}\label{smallerrelations}
 Since every tuple in $\s^n_k$ (resp. $\r^n_k$) is completely determined by its first $n$ entries (resp. 2$n$ entries), we will omit the second half of such tuples.  
We therefore denote by $S^n_k$ and  $R^n_k$  the relations obtained from $\s^n_k$ and $\r^n_k$, respectively, by deleting the second half of every tuple in $\s^n_k$ and $\r^n_k$, i.e.,
$$S^n_k:= \{(x_1, \dots, x_{n}) \in \2^n \st (x_1, \dots, x_{n},1+x_1, \dots,1+x_{n})\in \s^{n}_k\}$$
and
$$R^n_k:= \{(x_1, \dots, x_{2n}) \in \2^{2n} \st (x_1, \dots, x_{2n},1+x_1, \dots,1+x_{2n})\in \r^{n}_k\}$$
where the above sums are taken mod 2.
\end{remark}

Note that $S^n_k$ is the $n$-ary relation on $\2$  whose members are tuples  in which any two $0$'s are separated by at least $k$ symbols $1$
(in particular, if the first position is $0$, then the last $k$ positions must be $1$). Furthermore,
 $R^n_k $ is the cartesian product $S^n_1 \times S^n_k$.

As mentioned earlier we will use the relations $S^n_1,  S^n_k$ and $R^n_k$ with the understanding that we are omitting the second parts of the relations 
$\s^n_1, \s^n_k$ and $\r^n_k$ in order to simplify the notation.

\medskip
{\bf Notations.}~  In the sequel  $k\geq 4$ stands for an {\bf even} integer. Set
 $n(k):=k(k+1)+1$. We  will write $\r^{n(k)}$ for $\r^{n(k)}_k$ and  $R^{n(k)}_{}$ for $R^{n(k)}_k$.
Let $M^k_{\uparrow}$ be the $n(k)\times n(k)$ matrix with columns in
$S^{n(k)}_1$, the first being $c_1=[0110101\dots0101]^T$ and
the remaining columns are obtained by applying cyclic shifts to $c_1$, i.e.,
\begin{eqnarray*}
 c_2&=&[10110101\cdots010]^T,\\
 c_3&=&[010110101\cdots01]^T,\\
 &\cdots&\\
 c_{n(k)}&=&[110101\cdots010]^T.
\end{eqnarray*}

\begin{remark}\label{rem:1} Let $r_i$ and $r_j$ be two rows of $M^k_{\uparrow}$.
If $|i-j|\geq 2 ({\rm mod}\,n(k))$, then $r_i$ and $r_j$ have a $0$ in the same position.
\end{remark}

\begin{lemma}\label{lem:1}
If $k'<k$, then there is no  $n(k')\times n(k)$ matrix $N$ whose columns are in $S^{n(k')}_1$
and whose rows are rows of $M^k_{\uparrow}$.
\end{lemma}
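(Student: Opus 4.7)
The plan is to argue by contradiction. Suppose such an $n(k') \times n(k)$ matrix $N$ exists, with its $s$-th row being row $r_{i_s}$ of $M^k_{\uparrow}$, where $i_1, \ldots, i_{n(k')} \in \{1, \ldots, n(k)\}$. My goal is to derive an impossible parity condition on the sequence $(i_s)$.

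First I would show that, treating the subscript $s$ cyclically (so that $i_{n(k')+1} := i_1$), one has $i_{s+1} \equiv i_s \pm 1 \pmod{n(k)}$ for every $s$. Indeed, since each column of $N$ lies in $S^{n(k')}_1$, no two cyclically consecutive entries of a column may both equal $0$; equivalently, rows $r_{i_s}$ and $r_{i_{s+1}}$ of $M^k_{\uparrow}$ must share no common $0$-position. Remark~\ref{rem:1} forbids $|i_s - i_{s+1}| \geq 2 \pmod{n(k)}$, and the case $i_s = i_{s+1}$ is also excluded: every row of $M^k_{\uparrow}$ contains a $0$ (each row is a permutation of the entries of $c_1$), so a repeated row would reproduce that $0$ in two consecutive positions of some column of $N$. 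Hence consecutive indices must differ by exactly $1$ in cyclic distance.

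This reduces the problem to a closed-walk count on $\mathbb{Z}/n(k)\mathbb{Z}$: the sequence $i_1, \ldots, i_{n(k')}, i_1$ is a closed walk of length $n(k')$ with unit steps $\pm 1$. Letting $p$ and $m$ denote the numbers of $+1$- and $-1$-steps, one has $p + m = n(k')$ and $p - m = a \cdot n(k)$ for some integer $a$, hence $2p = n(k') + a \cdot n(k)$. Now both $n(k) = k(k+1) + 1$ and $n(k') = k'(k'+1) + 1$ are odd (since $k(k+1)$ and $k'(k'+1)$ are products of consecutive integers). Therefore $a$ must be odd. On the other hand $|a|\cdot n(k) = |p - m| \leq p + m = n(k')$, and $k' < k$ gives $n(k') < n(k)$, forcing $a = 0$. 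This contradicts the parity of $a$, completing the argument.

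The main obstacle, and the step most deserving careful justification, is the first one: translating the $S^{n(k')}_1$-condition on columns of $N$ into the $\pm 1$ step restriction on consecutive $i_s$'s via Remark~\ref{rem:1} (together with the auxiliary observation that every row of $M^k_{\uparrow}$ carries a $0$). Once this is in place, the final parity-versus-magnitude clash is a short arithmetic check relying crucially on $n(k)$ being odd for every even $k$.
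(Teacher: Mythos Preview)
Your proposal is correct and follows essentially the same route as the paper's proof: both use Remark~\ref{rem:1} to force cyclically consecutive rows of $N$ to be adjacent rows of $M^k_{\uparrow}$, and then extract a parity contradiction from the fact that $n(k')$ is odd. Your write-up is simply more explicit about the closed-walk step (tracking $p-m=a\cdot n(k)$ and bounding $|a|$), whereas the paper compresses this into the single sentence ``But then $n(k')$ would be even''; you also make explicit the exclusion of $i_{s+1}=i_s$, which the paper leaves implicit.
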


\begin{proof} Suppose that $k'<k$ and that $N$ is an $n(k')\times n(k)$ matrix whose columns are in $S^{n(k')}_1$.
Suppose, by way of contradiction, that the rows of $N$ are rows of $M^k_{\uparrow}$. By Remark~\ref{rem:1}, the only possible adjacent rows
of a row $r$ in $N$ are exactly the predecessor and successor rows of $r$ in $M^k_{\uparrow}$.
But then $n(k')$ would be even, thus yielding the desired contradiction.
\end{proof}

Let $M^k_{\downarrow}$ be the $n(k)\times n(k)$ matrix with columns in
$S^{n(k)}_k$, and such that the first is
$c'_1=[0\underbrace{1\cdots 1}_{k+1}0\underbrace{1\cdots 1}_{k}\cdots0\underbrace{1\cdots 1}_{k}]^T$ and
the remaining columns are obtained by applying cyclic shifts to $c_1'$ as before.

\begin{remark}\label{rem:2} Since $k\geq 4$ is even, if $r_i$ is a row of $M^k_{\uparrow}$, and $r'_j$ is a row of $M^k_{\downarrow}$,
then $r_i$ and $r'_j$ have a $0$ in the same position.
\end{remark}

\begin{lemma}\label{lem:2}
If $k'>k$, then there is no  $n(k')\times n(k)$ matrix $N$ whose columns are in $S^{n(k')}_k$
and whose rows are rows of $M^k_{\downarrow}$.
\end{lemma}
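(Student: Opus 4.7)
My plan is to mirror the argument of Lemma~\ref{lem:1}. Suppose, for contradiction, that such a matrix $N$ exists, and let $\sigma\colon\{1,\dots,n(k')\}\to\{1,\dots,n(k)\}$ record the row selection, so that row $i$ of $N$ equals row $\sigma(i)$ of $M^k_{\downarrow}$; both index sets are viewed cyclically. Because each column of $N$ must lie in $S^{n(k')}_k$, its zeros are cyclically at least $k+1$ apart, so any two rows of $N$ at cyclic distance at most $k$ in $\mathbb{Z}/n(k')\mathbb{Z}$ cannot have a $0$ in the same column.

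Next I identify exactly when two rows of $M^k_{\downarrow}$ share a $0$. Since $M^k_{\downarrow}$ is circulant with generating column $c'_1$ whose zero positions have $k-1$ cyclic gaps of size $k+1$ and a single gap of size $k+2$, rows $r'_a$ and $r'_b$ share a $0$ in some column if and only if $a-b$ lies in the difference set
\[
D=\{\,j(k+1)\bmod n(k),\ j(k+1)+1\bmod n(k):\ 1\le j\le k-1\,\}.
\]
Combining this with the previous paragraph, the cyclic sequence $\sigma(1),\dots,\sigma(n(k'))$ must satisfy the following \emph{safety condition}: for every $i$ and every $1\le l\le k$, the increment $\sigma(i+l)-\sigma(i)$ avoids $D\cup\{0\}$ modulo $n(k)$.

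The crux, and the step I expect to be the main obstacle, is converting this local $k$-window condition into a global impossibility. A double-counting warm-up is a natural first attempt: $N$ has $k\cdot n(k')$ zeros in total (each row of $M^k_{\downarrow}$ contributes $k$), while the $S^{n(k')}_k$ constraint permits each column at most $\lfloor n(k')/(k+1)\rfloor$ zeros, and a short computation using $n(k)=k(k+1)+1$ shows that the inequality $k\cdot n(k')\le n(k)\cdot\lfloor n(k')/(k+1)\rfloor$ already fails in the near-diagonal regime (for example $k'=k+2$), yielding an immediate contradiction there. For the remaining values of $k'$ where counting is insufficient, I would analyse the consecutive increments $d_i=\sigma(i+1)-\sigma(i)$ in $\mathbb{Z}/n(k)\mathbb{Z}$: the safety condition, together with the fact that the elements of $D$ cluster around the multiples of $k+1$, should force the $d_i$ to be small in balanced representation, and then the cyclic closure $\sum_{i=1}^{n(k')}d_i\equiv 0\pmod{n(k)}$ must yield a congruence incompatible with $n(k')=k'(k'+1)+1$ whenever $k'>k$. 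This final combinatorial step, parallel to the parity collapse that concludes Lemma~\ref{lem:1}, is where I expect the real work to lie.
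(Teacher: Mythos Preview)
Your proposal is built on a misreading that is in fact a typo in the paper's statement: the relation should be $S^{n(k')}_{k'}$, not $S^{n(k')}_{k}$. This is forced by how the lemma is actually used (the second factor of $R^{n(k')}=S^{n(k')}_{1}\times S^{n(k')}_{k'}$ is $S^{n(k')}_{k'}$), and it is also what the paper's own proof assumes when it says that each column ``has at most $k'$ $0$'s'': that bound is false for $S^{n(k')}_{k}$ but exact for $S^{n(k')}_{k'}$, since $n(k')=k'(k'+1)+1$.

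With the subscript corrected, your ``double-counting warm-up'' \emph{is} the paper's proof, and it settles every case $k'>k$ at once. Each row of $M^{k}_{\downarrow}$ has exactly $k$ zeros, so $N$ has $k\cdot n(k')$ zeros; hence some column carries at least $k\,n(k')/n(k)$ zeros. The required strict inequality
\[
\frac{k\,n(k')}{n(k)}>k'
\]
is equivalent, after clearing denominators and cancelling, to $kk'(k'-k)>k'-k$, i.e.\ to $kk'>1$, which holds for all $k'>k\ge 4$. There is no ``near-diagonal'' restriction.

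The reason you thought the counting only worked for $k'$ close to $k$ is precisely the typo: reading the column constraint as $S^{n(k')}_{k}$ gives the much looser per-column bound $\lfloor n(k')/(k+1)\rfloor$, and then the pigeonhole inequality indeed fails for large $k'$. So the whole second half of your plan --- the difference set $D$, the safety condition on $\sigma$, the analysis of consecutive increments $d_i$, and the hoped-for congruence obstruction --- is unnecessary machinery generated by that misread subscript. It is not wrong in spirit, but it is aimed at a harder statement than the one actually needed, and the ``final combinatorial step'' you flag as the main obstacle is never reached in the paper because the straight zero-count already closes the argument.
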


\begin{proof} Suppose that $k'>k$ and that $N$ is an $n(k')\times n(k)$ matrix whose columns are in $S^{n(k')}_k$.
Assume, by way of contradiction,  that the rows of $N$ are rows of $M^k_{\downarrow}$.
Since each row of  $M^k_{\downarrow}$ has exactly $k$ $0$'s, we have that $N$ has $k\times n(k')$ $0$'s.
Hence the matrix $N$ has a column with at least $\frac{k\times n(k')}{n(k)}$ symbols 0. 
It is easy to verify that since $k' > k \ge 4$, we have that $\frac{k\times n(k')}{n(k)}>k'$. 
But this yields the desired contradiction, since all columns of $N$ are members of  $S^{n(k')}_k$, and each has at most $k'$ $0$'s.
\end{proof}

Define $M_k$ as the $2n(k)\times n(k)$ matrix given by
\[
M_k = \left(
\begin{array}{c}
M^k_{\uparrow}\\
M^k_{\downarrow}\\
 \end{array}
\right).
\]
Notice that each column of $M_k$ is a tuple of $R^{n(k)}$.

\begin{lemma}\label{lem:3}
Let $N$ be a $2n(k')\times n(k)$ matrix whose columns are in $R^{n(k')}$
and whose rows are rows of $M^k$. Then, either all rows of $N$ are rows of  $M^k_{\downarrow}$, or
the first $n(k')$ are rows of  $M^k_{\uparrow}$ and the remaining $n(k')$ are rows of  $M^k_{\downarrow}$.
\end{lemma}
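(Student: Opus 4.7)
The plan is to exploit the product structure $R^{n(k')}=S^{n(k')}_1\times S^{n(k')}_{k'}$ recorded in Remark~\ref{smallerrelations}: every column of $N$ has its first $n(k')$ entries in $S^{n(k')}_1$ and its last $n(k')$ entries in $S^{n(k')}_{k'}$. Both relations are cyclic and, in particular, forbid two cyclically adjacent zeros in any column. I will therefore analyze the top half (rows $1,\ldots,n(k')$) and the bottom half (rows $n(k')+1,\ldots,2n(k')$) of $N$ independently. The strategy is first to show that each half is \emph{homogeneous}, i.e., consists of rows coming from a single one of $M^k_\uparrow$ or $M^k_\downarrow$; and then to rule out the case in which the bottom half comes from $M^k_\uparrow$.

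For homogeneity, I would argue by contradiction. Suppose the top half of $N$ contained both a row of $M^k_\uparrow$ and a row of $M^k_\downarrow$. Traversing positions $1,\ldots,n(k')$ cyclically, there would exist a pair of cyclically adjacent positions occupied by rows of different types. By Remark~\ref{rem:2}, these two rows share a $0$ in some column $j$, producing two cyclically adjacent zeros in the top half of that column and contradicting $S^{n(k')}_1$. Exactly the same reasoning, using that $S^{n(k')}_{k'}$ also forbids cyclically adjacent zeros, shows the bottom half is homogeneous.

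To rule out that the bottom half is entirely from $M^k_\uparrow$, assume it is, and write its rows in order as $r_{i_1},\ldots,r_{i_{n(k')}}$. Since $S^{n(k')}_{k'}$ forbids two zeros at cyclic distance at most $k'$, no pair of bottom-half rows at $N$-distance $1$ or $2$ may share a $0$ in any column (here we use $k'\ge 2$, which follows from $n(k')\ge 5$). Combined with Remark~\ref{rem:1}, this forces $i_{\ell+1}-i_\ell\equiv\pm 1$ and $i_{\ell+2}-i_\ell\equiv\pm 1\pmod{n(k)}$ for every $\ell$. But $i_{\ell+2}-i_\ell=(i_{\ell+2}-i_{\ell+1})+(i_{\ell+1}-i_\ell)\in\{-2,0,2\}$, and since $n(k)\ge 21$ none of these values is congruent to $\pm 1$ modulo $n(k)$, a contradiction. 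Consequently the bottom half consists entirely of rows of $M^k_\downarrow$, while the top half is either all from $M^k_\uparrow$ (the second alternative of the lemma) or all from $M^k_\downarrow$ (the first alternative). The delicate point is this last step: one must extract a contradiction from the \emph{sharper} relation $S^{n(k')}_{k'}$ through three consecutive rows, as the weaker $S^{n(k')}_1$ alone would still allow cyclically shifted $M^k_\uparrow$-rows in the bottom half.
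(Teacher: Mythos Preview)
Your proof is correct and uses essentially the same ingredients as the paper: Remarks~\ref{rem:1} and~\ref{rem:2} together with the product structure $R^{n(k')}=S^{n(k')}_1\times S^{n(k')}_{k'}$. The only difference is organizational---the paper first bounds (via Remark~\ref{rem:1}) and then eliminates (via Remark~\ref{rem:2}) the $M^k_\uparrow$-rows in the bottom half, whereas you first establish homogeneity of each half via Remark~\ref{rem:2} and then use your distance-$2$ argument with Remark~\ref{rem:1} to rule out an all-$M^k_\uparrow$ bottom half; your version is in fact somewhat more explicitly argued.
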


\begin{proof}
By Remark~\ref{rem:1} and the fact that $R^{n(k')}:=S^{k'}_1\times S^{k'}_k$, there cannot be more than 2 rows of $M^k_{\uparrow}$ among the last $n(k')$.
In fact, by Remark~\ref{rem:2} there can only be rows from  $M^k_{\downarrow}$ among the last $n(k')$ rows of $N$.
Furthermore, from Remark~\ref{rem:2} and the fact that $R^{n(k')}:=S^{k'}_1\times S^{k'}_k$, it follows that either all of the first  $n(k')$ 
rows of $N$ are rows of $M^k_{\uparrow}$ or all of the first  $n(k')$ rows of $N$ are rows of $M^k_{\downarrow}$.
\end{proof}

Let $f_k$ be the $n(k)$-ary partial function whose domain is the set of rows of $M_k$, and such that
$f_k$ is constant $0$ on the rows of $M^k_{\uparrow}$ and constant $1$ on the rows of $M^k_{\downarrow}$.

\begin{proposition} \label{main}
Let $k,k'\geq 4$ be even integers. Then $f_k  \in \pp R^{n(k')}$ if and only if $k\neq k'$.
\end{proposition}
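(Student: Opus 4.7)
The plan is to handle the two directions of the equivalence separately. For the ``only if'' direction ($k=k'$), the matrix $M_k$ itself witnesses non-preservation: by construction its columns lie in $R^{n(k)}$ and its $2n(k)$ rows are precisely the elements of $\d(f_k)$, and the image tuple $(f_k(M_{k,1*}),\dots,f_k(M_{k,2n(k)*}))$ has its first $n(k)$ coordinates equal to $0$ (from rows of $M^k_{\uparrow}$) and its last $n(k)$ coordinates equal to $1$ (from rows of $M^k_{\downarrow}$). The first half of this tuple is the all-zero tuple, which cannot lie in $S^{n(k)}_1$ since any two of its $0$'s would need to be separated by at least one $1$; therefore the image lies outside $R^{n(k)} = S^{n(k)}_1 \times S^{n(k)}_k$, and $f_k \notin \pp R^{n(k)}$.

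For the ``if'' direction ($k \ne k'$), I would start from an arbitrary $2n(k') \times n(k)$ matrix $N$ whose columns lie in $R^{n(k')}$ and whose rows belong to $\d(f_k)$ (i.e., are rows of $M_k$), and invoke Lemma \ref{lem:3}. That lemma forces one of two configurations: either (i) every row of $N$ is a row of $M^k_{\downarrow}$, or (ii) the top $n(k')$ rows of $N$ come from $M^k_{\uparrow}$ while the bottom $n(k')$ come from $M^k_{\downarrow}$. Exploiting the product structure $R^{n(k')} = S^{n(k')}_1 \times S^{n(k')}_{k'}$, the top block of $N$ (the first $n(k')$ rows) has columns in $S^{n(k')}_1$ and the bottom block has columns in $S^{n(k')}_{k'}$.

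Now I would split on the sign of $k-k'$. If $k' > k$, then in both configurations the bottom block is an $n(k') \times n(k)$ matrix with rows from $M^k_{\downarrow}$ and columns in $S^{n(k')}_{k'}$, which is forbidden by Lemma \ref{lem:2}; hence no admissible $N$ exists and $f_k$ preserves $R^{n(k')}$ vacuously. If $k' < k$, then configuration (ii) would make the top block an $n(k') \times n(k)$ matrix with rows from $M^k_{\uparrow}$ and columns in $S^{n(k')}_1$, contradicting Lemma \ref{lem:1}; hence only configuration (i) can occur. In configuration (i), $f_k$ returns $1$ on every row of $N$, so the image is the all-ones tuple of $\2^{2n(k')}$, which plainly lies in $S^{n(k')}_1 \times S^{n(k')}_{k'} = R^{n(k')}$. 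Thus in every sub-case $f_k \in \pp R^{n(k')}$.

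The main step, conceptually, is simply aligning the Cartesian-product decomposition $R^{n(k')} = S^{n(k')}_1 \times S^{n(k')}_{k'}$ with the top/bottom row partition of $N$ furnished by Lemma \ref{lem:3}, so that the correct exclusion lemma (Lemma \ref{lem:1} for top blocks, Lemma \ref{lem:2} for bottom blocks) is invoked in each sub-case. Once this bookkeeping is in place, the only residual checks --- that the all-ones tuple belongs to every $S^n_j$ and that the all-zero tuple does not belong to $S^{n(k)}_1$ --- are immediate from the definitions, so no additional calculation is required.
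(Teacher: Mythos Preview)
Your argument is correct and follows essentially the same route as the paper: the matrix $M_k$ itself witnesses that $f_k\notin\pp R^{n(k)}$ via the image tuple $[0\cdots01\cdots1]^T$, and for $k\ne k'$ you combine Lemma~\ref{lem:3} with Lemma~\ref{lem:2} (when $k'>k$, ruling out any admissible $N$) or with Lemma~\ref{lem:1} (when $k'<k$, forcing configuration~(i) and hence the all-ones image). Your write-up merely makes explicit the bookkeeping that the paper compresses into a single sentence; in particular, your use of $S^{n(k')}_{k'}$ for the bottom block is exactly what is needed and what the proof of Lemma~\ref{lem:2} actually establishes.
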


\begin{proof}
Since $[0\cdots01\cdots1]^T$ does not belong to $R^{n(k)}$, we see that $f_k \not\in \pp R^{n(k)}$.

So suppose that $k\neq k'$. If $k<k'$, then it follows from Definition \ref{preserves}  and  Lemmas~\ref{lem:2} and \ref{lem:3} that $f_k \in \pp R^{n(k')}$.

Suppose now that $k>k'$. If $N$ is an $2n(k')\times n(k)$ matrix whose columns are in $R^{n(k')}$
and whose rows are rows of $M^k$ (otherwise we are done for the domain of $f_k$ is exactly the set of rows of $M^k$),
then by  Lemmas~\ref{lem:1} and \ref{lem:3} it follows that all
 rows of $N$ are rows of  $M^k_{\downarrow}$. Since $f_k$ is constant 1 on the rows of $M^k_{\downarrow}$,
 and since the constant 1 $2n(k')$ tuple belongs to $R^{n(k')}$, we conclude that $f_k \in \pp R^{n(k')}$.
\end{proof}

Let $\overline{M_k}$ be the $2n(k) \times n(k)$ matrix obtained by replacing every row of the matrix $M_k$ by its dual tuple
(obtained by interchanging $1$'s and $0$'s) and define  $L_k$ as the $4n(k)\times n(k)$ matrix given by
\[
L_k = \left(
\begin{array}{c}
M_k\\
\overline{M_k}\\
 \end{array}
\right).
\]

Moreover, let $g_k$ be the $n(k)$-ary partial function whose domain is the set of rows of $L_k$, and such that
$g_k({\vec u}) =f_k({\vec u})$  if ${\vec u}$ is a row of $M_k$ and $g_k({\vec u}) =1+f_k({\vec u})$ (mod 2) if ${\vec u}$ is a row of $\overline{M_k}$.  
Then, Theorem \ref{main} can be restated as follows:

\begin{thma} Let $k,k'\geq 4$ be even integers. Then $g_k  \in \pp \r^{n(k')}$ if and only if $k\neq k'$.
\end{thma}

Let ${\bf E}_{\ge 4}:=\{4,6,8,\dots\}$ be the set of all even integers greater or equal to 4 and denote by $\PN$ the power set of ${\bf E}_{\ge 4}$. 
Since  $$\ple  \cap \pne  \subseteq \pp (\r^n_k)$$
for every $n \ge 5$ and every $n > k \ge 1$, we have
$$ \ple  \cap \pne  \subseteq \bigcap_{t \in {\bf E}_{\ge 4}\setminus X}\pp \r^{n(t)}$$ 
for every subset $X$ of ${\bf E}_{\ge 4}$.

So let  $X \subset {\bf E}_{\ge 4}$ and fix $k \in X$. Then $g_k  \in \pp \r^{n(t)}$ for all $t \in {\bf E}_{\ge 4}\setminus X$, i.e.,  
$$ g_k \in  \bigcap_{t \in {\bf E}_{\ge 4}}\setminus X.$$ On the other hand, if  $k \in {\bf E}_{\ge 4}\setminus X$, then we have  
$$ g_k \not\in    \bigcap_{t \in {\bf E}_{\ge 4}\setminus X}\pp \r^{n(t)}\quad \text{and}\quad g_k \not\in    \bigcap_{t \not\in X}\pp \r^{n(t)},$$ 
since $g_k  \not\in \pp \r^{n(k)}$. Therefore the map
$$\chi := \PN \to [\ple  \cap \pne,  \P2]$$
  defined by 
  $$\chi(X):=  \displaystyle  \bigcap_{t \in {\bf E}_{\ge 4}\setminus X}\pp \r^{n(t)}$$
  is one-to-one and we have shown the following result which answers our question on cardinality of the interval $[\ple  \cap \pne,  \P2]$.

  \begin{corollary}
  The interval of partial clones  $[\ple  \cap \pne,  \P2]$ is of continuum cardinality on $\2$.
  \end{corollary}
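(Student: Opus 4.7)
The plan is to reuse the family of separating partial functions $\{g_k\mid k\in{\bf E}_{\ge 4}\}$ supplied by the Main Theorem to build an injection from ${\mathcal P}({\bf E}_{\ge 4})$ into $[\ple\cap\pne,\P2]$. Concretely, for each $X\subseteq{\bf E}_{\ge 4}$, I would set
$$\chi(X)\;:=\;\bigcap_{t\in{\bf E}_{\ge 4}\setminus X}\pp\,\r^{n(t)}.$$
Since the infimum in the lattice ${\mathcal L}_{P_\2}$ is set-theoretic intersection, each $\chi(X)$ is automatically a partial clone. The Definability Lemma argument recalled earlier in the paper yields $\ple\cap\pne\subseteq\pp\,\r^{n(t)}$ for every even $t\ge 4$, so $\ple\cap\pne\subseteq\chi(X)\subseteq\P2$; hence $\chi(X)$ lies in the claimed interval.

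The core step is to show that $\chi$ is one-to-one. Assume $X\neq X'$ and pick, without loss of generality, some $k\in X\setminus X'$. Because $k\in X$, the index $k$ is absent from the intersection defining $\chi(X)$, so every factor $\pp\,\r^{n(t)}$ of $\chi(X)$ has $t\neq k$; the Main Theorem then gives $g_k\in\pp\,\r^{n(t)}$ for each such $t$, and therefore $g_k\in\chi(X)$. On the other hand, $k\in{\bf E}_{\ge 4}\setminus X'$, so $\pp\,\r^{n(k)}$ is itself one of the factors cut out by $\chi(X')$; the Main Theorem gives $g_k\notin\pp\,\r^{n(k)}$, hence $g_k\notin\chi(X')$. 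Thus $\chi(X)\neq\chi(X')$, proving injectivity.

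To conclude, injectivity of $\chi$ yields at least $|{\mathcal P}({\bf E}_{\ge 4})|=2^{\aleph_0}$ distinct partial clones inside $[\ple\cap\pne,\P2]$. The matching upper bound is immediate since $\P2$ itself is countable, so any collection of its subsets has cardinality at most $2^{\aleph_0}$; in particular so does the interval. The main obstacle is really the Main Theorem that supplies the separating family; once that is granted, the remaining work is a short piece of cardinal bookkeeping, and the only thing to be careful about is recording the biconditional $g_k\in\pp\,\r^{n(k')}\iff k\neq k'$ on the correct side of the index partition when comparing $\chi(X)$ and $\chi(X')$.
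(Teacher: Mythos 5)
Your proposal is correct and follows the same route as the paper: the identical map $\chi(X)=\bigcap_{t\in{\bf E}_{\ge 4}\setminus X}\pp\,\r^{n(t)}$, with injectivity obtained from the Main Theorem by locating some $k\in X\setminus X'$ and observing $g_k\in\chi(X)$ but $g_k\notin\chi(X')$. Your write-up is in fact slightly more careful than the paper's (you make the symmetric-difference argument and the trivial upper bound $2^{\aleph_0}$ explicit), but the substance is the same.
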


\end{document}